\documentclass[a4paper]{amsart}
\usepackage{amsmath}
\usepackage{amssymb}
\usepackage{amsthm}

\usepackage[utf8]{inputenc}
\usepackage[T1]{fontenc}
\usepackage[charter]{mathdesign}
\usepackage{hyperref}
\hypersetup{
bookmarks=true,
pdfpagemode=UseNone,
pdfstartview=FitH,
pdfdisplaydoctitle=true,
pdftitle={A random pointwise ergodic theorem with Hardy field weights},
pdfauthor={Ben Krause and Pavel Zorin-Kranich},
pdflang=en-US
}
\usepackage{amsmath,amsthm,amssymb}
\theoremstyle{plain}

\newtheorem{theorem}[equation]{Theorem}

\newtheorem{proposition}[equation]{Proposition}
\newtheorem{lemma}[equation]{Lemma}

\numberwithin{equation}{section}

\usepackage[style=alphabetic,firstinits,maxnames=4,eprint=true,doi=false,isbn=false,backend=bibtex]{biblatex}
\def\woMR#1{\w@MR#1MR#1MR\relax}%
\def\w@MR#1MR#2MR#3\relax{#2}
\def\MR@URL#1 #2\relax{http://www.ams.org/mathscinet-getitem?mr=#1}
\def\MRnumber#1{\href{\MR@URL#1 \relax}{\nolinkurl{\woMR#1}}}
\DeclareFieldFormat{mrnumber}{\mkbibacro{MR}\addcolon\space\MRnumber{#1}}
\newbibmacro*{doi+eprint+url}{%
\iftoggle{bbx:doi}
{\printfield{doi}}
{}%
\newunit\newblock
\iftoggle{bbx:eprint}
{\usebibmacro{eprint}}
{}%
\newunit\newblock
\iftoggle{bbx:url}
{\usebibmacro{url+urldate}}
{}%
\newunit\newblock
\printfield{mrnumber}}

\newbibmacro{string+urldoi}[1]{%
  \iffieldundef{url}{%
    \iffieldundef{doi}{%
        #1%
      }{
        \href{http://dx.doi.org/\thefield{doi}}{#1}%
    }%
  }{
    \href{\thefield{url}}{#1}%
  }%
}
\DeclareFieldFormat[article,misc]{title}{\usebibmacro{string+urldoi}{\mkbibquote{#1}}}

\addbibresource{random-ergodic.bib}

\newcommand*{\Z}{\mathbb{Z}}
\newcommand*{\Q}{\mathbb{Q}}

\newcommand*{\N}{\mathbb{N}}
\newcommand*{\R}{\mathbb{R}}
\newcommand*{\RR}{\mathbb{R}}

\newcommand*{\E}{\mathbb{E}}

\newcommand{\lac}[1][\N]{\lfloor \rho^{#1} \rfloor}

\def\<{\left\langle}
\def\>{\right\rangle}

\begin{document}
\allowdisplaybreaks
\title{A random pointwise ergodic theorem with Hardy field weights}
\author{Ben Krause}
\address{UCLA Math Sciences Building\\
         Los Angeles, CA 90095-1555, USA}
\email{benkrause23@math.ucla.edu}
\author{Pavel Zorin-Kranich}
\address{Universit\"at Bonn\\
  Mathematisches Institut\\
  Endenicher Allee 60\\
  53115 Bonn\\
  Germany
}
\email{pzorin@uni-bonn.de}
\maketitle

\begin{abstract}
Let $a_n$ be the random increasing sequence of natural numbers which takes each value independently with probability $n^{-a}$, $0 < a < 1/2$, and let $p(n) = n^{1+\epsilon}$, $0 < \epsilon < 1$.
We prove that, almost surely, for every measure-preserving system $(X,T)$ and every $f \in L^1(X)$ the modulated, random averages
\[
\frac{1}{N} \sum_{n = 1}^N e(p(n)) T^{a_n(\omega)} f
\]
converge to $0$ pointwise almost everywhere.
\end{abstract}

\section{Introduction}
A sequence of integers $\{n_k\} \subset \Z$ is said to be \emph{universally $L^p$-good} if for every measure-preserving system $(X,\mu,T)$ and every $f \in L^p(X)$ the subsequence averages
\[
A_N^{\{n_k\}} f := \frac{1}{N}\sum_{k=1}^N T^{n_k}f
\]
converge pointwise almost everywhere.
In this language, Birkhoff's classical pointwise ergodic theorem \cite{0003.25602} states that the full sequence of integers is universally $L^1$-good.

Obtaining pointwise convergence results for rougher, sparser sequences is much more challenging.
For instance, Bourgain's Polynomial Ergodic Theorem \cite{MR1019960} states that the sequence $\{P(n)\}$, $P$ integer polynomial, is universally $L^p$-good for each $p>1$.
Note that $\{P(n)\}$ are zero-Banach-density subsequences of the integers; in fact, Bourgain used a probabilistic method to find \emph{extremely} sparse universally good sequences.
From now on $\{X_n\}$ will denote a sequences of independent $\{0,1\}$ valued random variables (on a probability space $\Omega$) with expectations $\sigma_n$.
The \emph{counting function} $a_n(\omega)$ is the smallest integer subject to the constraint
\[ X_1(\omega) + \dots + X_{a_n(\omega)}(\omega) = n.\]

\begin{theorem}[{\cite[Proposition 8.2]{MR937581}}]
Suppose
\[
\sigma_n = \frac{ (\log \log n)^{B_p}}{n}, \ B_p > \frac{1}{p-1}, \ 1 < p \leq 2.
\]
Then, almost surely, $\{a_{n}\}$ is universally $L^p$-good.
\end{theorem}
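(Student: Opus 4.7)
I would first apply Calder\'on's transference principle to reduce the problem to pointwise almost sure convergence on $\Z$ of the convolution averages
\[
A_N^\omega f(x) \;=\; \frac{1}{N}\sum_{n=1}^N f(x - a_n(\omega)) \;=\; \frac{1}{N}\sum_{m=1}^{a_N(\omega)} X_m(\omega)\, f(x-m).
\]
The second equality already suggests the comparison with the deterministic weighted operator
\[
D_N f(x) \;:=\; \frac{1}{N}\sum_{m=1}^{M_N} \sigma_m f(x-m),\qquad \text{where } \sum_{m\le M_N}\sigma_m = N,
\]
since $a_N(\omega) \approx M_N$ almost surely by a standard concentration estimate for $\sum_m X_m$. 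The strategy then splits into two independent tasks: (a) show the random fluctuation $(A_N^\omega - D_N)f$ tends to $0$ pointwise almost surely, and (b) show $D_N f$ converges pointwise to the ergodic mean.

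For (a), the fluctuation is a normalised sum of independent centered random variables $(X_m - \sigma_m) f(x-m)$. A direct variance computation yields
\[
\E \bigl\|(A_N^\omega - D_N) f\bigr\|_2^2 \;\lesssim\; \frac{1}{N^2}\sum_{m \le M_N}\sigma_m\, \|f\|_2^2 \;\lesssim\; \frac{\|f\|_2^2}{N},
\]
and for $p \in (1,2)$ a Rosenthal-type moment inequality yields an analogous $\ell^p$ bound at the cost of a polylogarithmic factor coming from $\max_m \sigma_m \sim (\log\log M_N)^{B_p}/M_N$. Along the lacunary subsequence $N_j := 2^j$ these bounds are summable, so Borel--Cantelli gives almost sure pointwise convergence $(A_{N_j}^\omega - D_{N_j}) f \to 0$ for $f$ ranging over a countable dense subclass of $\ell^p$ (say, finitely supported functions). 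Part (b) follows since $\sigma_m$ varies only polylogarithmically on each dyadic block, so a summation-by-parts reduces $D_N$ to a genuine Ces\`aro average handled by Birkhoff.

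The main obstacle is upgrading from the lacunary times $N_j$ to all $N$: one needs an $\ell^p \to \ell^p$ maximal estimate
\[
\Big\|\sup_{N_j \le N < N_{j+1}} \bigl| A_N^\omega f - A_{N_j}^\omega f \bigr|\Big\|_{\ell^p} \;\le\; c_j(\omega)\, \|f\|_{\ell^p},
\]
with $\sum_j c_j(\omega) < \infty$ almost surely. Establishing this requires a Rademacher--Menshov / metric entropy chaining argument over the $O(j)$ dyadic scales in $[N_j, N_{j+1}]$; at each scale one picks up a factor proportional to $\max_m \sigma_m \sim (\log\log M_{N_j})^{B_p}/M_{N_j}$. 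The condition $B_p > 1/(p-1)$ is calibrated precisely so that, after the chaining losses of order $j^{1/p'}$ per scale, the resulting bounds are almost surely summable in $j$. Once this maximal inequality is in hand, it extends both the lacunary convergence in (a) and the deterministic convergence in (b) from the dense subclass to all of $\ell^p$ and from the lacunary subsequence to all $N$; transference returns us to the original ergodic setting.
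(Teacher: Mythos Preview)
The paper does not contain a proof of this statement; it is quoted from Bourgain's 1988 paper as historical background in the introduction, with no argument supplied. There is therefore no ``paper's own proof'' against which to compare your proposal.

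As for the proposal on its own terms: the broad architecture --- transference to $\Z$, splitting into a deterministic weighted average plus a centered random fluctuation, moment bounds along a lacunary subsequence, and a chaining/entropy argument to pass to the full supremum --- is indeed the shape of Bourgain's original argument. Two points deserve more care. First, your $A_N^\omega$ and $D_N$ do not have the same upper summation limit: $a_N(\omega)$ is random while $M_N$ is deterministic, so the variance identity you wrote is not literally correct and the summands are not independent of the range of summation. One should first reparametrise by the deterministic cutoff (e.g.\ study $\frac{1}{W_M}\sum_{m\le M} X_m f(x-m)$ indexed by $M$) before invoking independence. Second, the claim that chaining over $O(j)$ scales costs $j^{1/p'}$ and that this is exactly offset by $B_p > 1/(p-1)$ is the crux of the whole theorem, and your sketch does not display this computation; in Bourgain's proof the step is carried out on the Fourier side via an entropy bound on the multiplier family, and the threshold $1/(p-1)$ emerges from the interaction between the $\ell^p$ moment inequality for sums of independent variables and the logarithmic number of scales. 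Without that calculation made explicit, the proposal is a plausible outline rather than a proof.
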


In the years to follow random sequences became a widely used model for pointwise ergodic theorems.
One indication at their amenability to analysis is LaVictoire's $L^{1}$ random ergodic theorem.
\begin{theorem}[{\cite{MR2576702}}]
Suppose $\sigma_n = n^{-a}$ with $0< a < 1/2$.
Then, almost surely, $\{a_{n}\}$ is universally $L^1$-good.
\end{theorem}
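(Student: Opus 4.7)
The plan is to transfer the problem via Calder\'on's principle to $\Z$, where the averages become convolutions with the random probability measures
\[
\mu_N^\omega := \frac{1}{N} \sum_{n=1}^N \delta_{a_n(\omega)}.
\]
By a standard Banach-principle/density reduction, almost-sure pointwise convergence on all of $L^1$ follows from two ingredients: convergence on the dense subclass $L^2$ together with an almost-sure weak-type $(1,1)$ bound for the maximal operator $\mathcal{M}^\omega f := \sup_N |\mu_N^\omega * f|$, both holding off a single null set in $\Omega$.

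For the dense-class step I would introduce the deterministic companion averages
\[
B_M f := \frac{1}{S_M} \sum_{k=1}^M \sigma_k T^k f, \qquad S_M := \sum_{k=1}^M \sigma_k \asymp \frac{M^{1-a}}{1-a},
\]
which converge pointwise a.e.\ by Abel summation from Birkhoff's theorem. Choosing $M = M_N$ so that $S_M \approx N$ and using the auxiliary identity $N\mu_N^\omega = \sum_{k \leq a_N(\omega)} X_k(\omega) \delta_k$, the Fourier difference $\hat{\mu}_N^\omega - \hat{\nu}_{M_N}$, where $\nu_M := S_M^{-1} \sum_{k \leq M} \sigma_k \delta_k$, reduces (after truncation controlling the random fluctuation of $a_N$) to a sum of independent, bounded, mean-zero variables. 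Bernstein's inequality on a sufficiently fine grid in $\T$, combined with Borel--Cantelli along the lacunary sequence $N_j = 2^j$, would then yield almost surely $\|\hat{\mu}_{N_j}^\omega - \hat{\nu}_{M_{N_j}}\|_\infty \lesssim N_j^{-\delta}$ for some $\delta > 0$. The hypothesis $a < 1/2$ enters precisely here, to force the variance of $\sum X_k e(k\theta)$ to dominate its squared mean. Plancherel, together with summability in $j$, then provides a.e.\ convergence of $A_{N_j}^\omega f$ along the lacunary scales to the Birkhoff limit for $f \in L^2$.

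The main obstacle is the almost-sure weak-type $(1,1)$ maximal bound, needed both to close the density argument and to fill in the scales between successive $N_j$ (the latter by a standard square-function or oscillation estimate). I would attack this via a Calder\'on--Zygmund decomposition of $f$ at height $\lambda$: the good part is absorbed by the $L^2$ estimate, while the bad part $b = \sum_I b_I$ requires a genuinely probabilistic argument. The point is that under $a < 1/2$ the atoms of $\mu_N^\omega$ are, with overwhelming probability, sufficiently well separated on the scale of each CZ cube $I$ that the translates $\mu_N^\omega * b_I$ have essentially disjoint supports and admit an $L^1$ quasi-orthogonality bound. Ensuring that the resulting exceptional sets are small \emph{uniformly} in $N$ calls for a further lacunary decomposition of the supremum together with a Borel--Cantelli step whose logarithmic loss is absorbed by the previously obtained polynomial Fourier decay. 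This endpoint estimate is the most delicate point and is where the restriction $a < 1/2$ is genuinely tight.
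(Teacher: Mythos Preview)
This theorem is not proved in the present paper: it is quoted from LaVictoire \cite{MR2576702} as a known result, and the only use made of it here is its corollary that, almost surely, the maximal operator $f\mapsto\sup_{N}\frac{1}{N}\sum_{n=1}^{N}T^{a_{n}(\omega)}|f|$ is of weak type $(1,1)$; this is invoked in \S\ref{sec:preliminaries} to reduce Theorem~\ref{MAIN} to the case $f\in L^{\infty}$. There is therefore no proof in this paper against which your proposal can be compared.

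As a standalone sketch your outline is reasonable in overall shape---transference to $\Z$, dense-class $L^{2}$ convergence via a Fourier comparison with deterministic weights using Bernstein-type concentration and Borel--Cantelli, and then a Calder\'on--Zygmund argument for the weak-$(1,1)$ endpoint---and you correctly identify that endpoint as the crux and the place where $a<1/2$ is decisive. The endpoint step, however, is not yet an argument: saying that the translates $\mu_{N}^{\omega}*b_{I}$ have ``essentially disjoint supports'' is neither precise nor, as stated, the right mechanism, since disjointness of the sets $a_{n}+I$ for a \emph{fixed} $N$ yields only the trivial $L^{1}$ bound and says nothing about the supremum over $N$. A genuine proof here requires quantitative, scale-uniform control on how many of the random points $a_{1},\dots,a_{N}$ can fall in an interval of a given length, together with a careful organization of the scales $N$ versus $|I|$; without making that counting step explicit your sketch has a real gap at exactly the hardest point.
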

Here, by the strong law of large numbers, almost surely
\[ a_n(\omega)/ n^{\frac{1}{1-a}} \]
converges to a non-zero number.
For comparison,  it is known that the sequences of $d$-th powers, $d>1$ integer, are universally $L^{1}$ bad \cite{MR2331324,MR2788358}.

Random sequences have also been used as a model for multiple ergodic averages.
Frantzikinakis, Lesigne, and Wierdl recently showed the following.
\begin{theorem}[{\cite[Theorem 1.1]{MR3043589}}]
\label{thm:flw}
Suppose $\sigma_{n} = n^{-a}$, $0<a<1/14$.
Then, almost surely, $(a_{n})_{n}$ has the following property:
for every pair of measure preserving transformations $T,S$ on a probability space $X$ and any functions $f,g\in L^{\infty}(X)$ the averages
\[
\sum_{n=1}^{N} g(S^{n}x) f(T^{a_{n}}x)
\]
converge pointwise almost everywhere.
\end{theorem}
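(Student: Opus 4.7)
My plan is the standard two-step reduction: prove (i) pointwise a.e.\ convergence along a lacunary subsequence $N_{k} = \lac[k]$ for every $\rho>1$, together with (ii) a small-oscillation bound controlling the averages between consecutive scales $N_{k}\le N\le N_{k+1}$. Since $\|A_{N}(f,g)\|_{L^{\infty}}\le \|f\|_{L^{\infty}}\|g\|_{L^{\infty}}$, any maximal inequality is automatic, so the essential work is convergence along $(N_{k})$. To analyze
\[
A_{N}(f,g)(x) := \frac{1}{N}\sum_{n=1}^{N} g(S^{n}x)\, f(T^{a_{n}(\omega)}x),
\]
I would re-index by the random successes $\{a_{n}(\omega)\}=\{m:X_{m}(\omega)=1\}$ and write
\[
N A_{N}(f,g)(x) = \sum_{m=1}^{a_{N}(\omega)} X_{m}(\omega)\, g(S^{\Sigma_{m}(\omega)}x)\, f(T^{m}x),
\qquad \Sigma_{m} := X_{1}+\cdots+X_{m},
\]
noting that the SLLN gives $\Sigma_{m}=c\,m^{1-a}(1+o(1))$ and $a_{N}\sim c'N^{1/(1-a)}$ a.s., so $g(S^{\Sigma_{m}}x)$ is a random perturbation of the deterministic $g(S^{\lfloor c m^{1-a}\rfloor}x)$.

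Next I would decompose $X_{m}=\sigma_{m}+(X_{m}-\sigma_{m})$ into a mean and a fluctuation. The main term, after absorbing the SLLN-perturbation of $\Sigma_{m}$ by summation by parts, is comparable to a weighted bilinear Hardy-field ergodic average
\[
\frac{1}{M}\sum_{m=1}^{M} g(S^{\lfloor c m^{1-a}\rfloor}x)\, f(T^{m}x),
\qquad M\sim N^{1/(1-a)},
\]
whose pointwise convergence for a countable dense family of $(f,g)$ follows, after Calder\'on transference to $\Z$, from van der Corput $+$ Cauchy--Schwarz reductions combined with Wierdl's single-variable Hardy-field ergodic theorem for $(\lfloor c m^{1-a}\rfloor)$ and a uniform-in-$y$ estimate for the associated exponential-sum multiplier; in spirit this is a bilinear Wiener--Wintner argument along the sublinear Hardy sequence $m^{1-a}$.

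The hard part is controlling the random error
\[
E_{N}(x) := \frac{1}{N}\sum_{m\le a_{N}} (X_{m}-\sigma_{m})\, g(S^{\Sigma_{m}(\omega)}x)\, f(T^{m}x)
\]
and showing it vanishes a.s.\ along $(N_{k})$ by a moment method plus Borel--Cantelli. The difficulty is that $\Sigma_{m}$ appears \emph{inside} the argument of $g$ and is measurable with respect to the same $\sigma$-algebra as $X_{m}$, so the summands are not independent. My plan is, on each dyadic block, to first condition $\Sigma_{m}$ to lie in a narrow concentration window (Bernstein/Hoeffding) and replace $S^{\Sigma_{m}}$ by $S^{\widetilde{\Sigma}_{m}}$ where $\widetilde{\Sigma}_{m}$ depends only on $X_{j}$ with $j\ll m$, which \emph{does} leave $X_{m}$ independent of the argument of $g$; then a van der Corput step in $m$ followed by the Marcinkiewicz--Zygmund inequality on independent blocks yields a fourth-moment bound $\bbE \int |E_{N_{k}}(x)|^{4}\,d\mu(x) \lesssim N_{k}^{-\delta(a)}$. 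Summability in $k$ closes the Borel--Cantelli step. The restriction $a<1/14$ enters here: each van der Corput/Cauchy--Schwarz application needed to decorrelate the $S$- and $T$-orbits costs an exponent linear in $a$, and only for $a$ this small does the net gain remain positive.
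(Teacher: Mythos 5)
The paper does not prove Theorem~\ref{thm:flw}; it is quoted verbatim from Frantzikinakis--Lesigne--Wierdl (\cite[Theorem 1.1]{MR3043589}) purely as background and motivation for the related but distinct Theorem~\ref{MAIN}, which the authors actually prove. There is therefore no ``paper's own proof'' to compare your sketch against.

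That said, your sketch does contain a genuine gap that would also be a problem in any proof. After you replace $X_m$ by its mean $\sigma_m$ and then replace $\Sigma_m$ by the deterministic sequence $\lfloor c\,m^{1-a}\rfloor$, your ``main term'' becomes a \emph{deterministic} bilinear average
\[
\frac{1}{M}\sum_{m=1}^{M} g\big(S^{\lfloor c\,m^{1-a}\rfloor}x\big)\, f(T^{m}x),
\]
and you propose to prove its pointwise a.e.\ convergence by a Calder\'on transference $+$ van der Corput $+$ Wiener--Wintner argument. Pointwise a.e.\ convergence of bilinear averages along a Hardy field sequence for \emph{not necessarily commuting} $T,S$ and $L^{\infty}$ inputs is not a consequence of the tools you list; even the commuting, linear case is Bourgain's deep double-recurrence theorem. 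Replacing the random argument $\Sigma_{m}(\omega)$ by $\lfloor c\,m^{1-a}\rfloor$ discards precisely the source of oscillation that makes the random problem tractable: the fluctuations of $\Sigma_{m}$ are what decorrelate the orbit of $S$ from the orbit of $T$. The strategy described in the present paper's \textsection\ref{sec:preliminaries} (which explicitly claims to follow the same plan as \cite{MR3043589}) shows that the chain of approximations treats the $S$-weight $g(S^{S_{n}(\omega)}x)$ (resp.\ $e(p(S_{n}))$) as a bounded scalar weight and controls it by a second-moment van der Corput estimate exploiting the independence of the $Y_{n}$ and a Chernoff concentration bound for $S_{n}$ --- never by passing to a deterministic Hardy field subsequence. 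You should keep $\Sigma_{m}$ random throughout and estimate the correlation $\E\big[Y_{n}Y_{n+m}\,e(\cdot)\big]$ (or the analogue with $g$) directly; that is where the smallness condition on $a$ actually has to be paid for.

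Two secondary points: (i) your reduction to a lacunary subsequence requires, in addition to the trivial $L^{\infty}$ bound on $A_{N}$, the observation of \cite{MR1325697} that for bounded weights and bounded $f$ the differences between consecutive lacunary scales go to zero as $\rho\downarrow 1$; it is worth stating rather than calling ``automatic.'' (ii) Your exploratory fourth-moment Borel--Cantelli bound is plausible in spirit, but the paper's analogous Proposition~\ref{key} succeeds with a second moment after a single van der Corput, and it is the exponent bookkeeping from that one van der Corput step (via Lemma~\ref{lem:crit} and Proposition~\ref{prop:as-corr}) that pins down the admissible range of $a$; layering Marcinkiewicz--Zygmund on top is not obviously needed and risks worsening the range.
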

It is noted in their paper that the linear sequence of powers $S^{n}$ can likely be replaced by other deterministic sequences, but their method of proof did not seem to allow this.
In this article we prove a related result in which we are able to replace the linear sequence of powers by a sequence drawn from a more general class at the cost of weakening the result in several other respects.
More precisely, with $0 < \epsilon< 1$ arbitrary but fixed, suppose $p : \RR \to \RR$ is a \emph{logarithmico-exponential} function which satisfies
\begin{enumerate}
\item the second-order difference relationship
\[ p(x+y+z) - p(x+y) - p(x+z)+ p(x) = O(x^{\epsilon -1} yz) \]
for $x,y, z >0$ (``big-O'' notation is recalled in the section on notation below); and
\item for all $a(x) \in C \cdot \Q [x]$, the set of real constant multiples of rational polynomials, $\frac{|a(x) - p(x)|}{\log x} \to \infty$.
\end{enumerate}
Good examples of such functions are $p(x) = x^{1+\epsilon}$. We refer the reader to \cite{MR2145587} for a more complete discussion of \emph{logarithmico-exponential} functions; informally, these are all the functions one can get by combining real constants, the variable $x$, and the symbols $\exp, \ \log, \cdot,$ and $+$. (e.g.\ $x^{1/2} = \exp(1/2 \cdot \log x)$ and $x^\pi/\log \log x$ are both logarithmico-exponential.)

Our main result is the following
\begin{theorem}\label{MAIN}
Suppose $\sigma_{n}=n^{-a}$, $0 < a < 1/2$, and $p$ is as above.
Then, almost surely, the following  holds:

For each measure-preserving system $(X,\mu,T)$ and each $f \in L^1(X)$ the averages
\[
\frac{1}{N} \sum_{n=1}^N e(p(n)) T^{a_n(\omega)}f
\]
converge to zero pointwise almost everywhere (here and later $e(t):= e^{2\pi i t}$).
\end{theorem}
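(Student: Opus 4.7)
My plan is to combine Fourier/circle-method analysis of the oscillatory weight $e(p(n))$ with the probabilistic framework of LaVictoire's $L^1$ random ergodic theorem. By the Calderón transference principle it suffices to work on the integer shift system; write
\[
A_N^\omega f(x) := \frac{1}{N}\sum_{n=1}^N e(p(n))\, f(x - a_n(\omega)),
\]
and let $m_N^\omega(\beta) := \frac{1}{N}\sum_{n=1}^N e(p(n))\, e(-a_n(\omega)\beta)$ be the associated Fourier multiplier on $\T$. Almost sure a.e.\ convergence to zero on $\ell^1(\Z)$ will follow from two ingredients: (i) an almost sure weak $(1,1)$ bound for the maximal operator $\sup_N |A_N^\omega|$, and (ii) a.s.\ pointwise convergence on a dense subclass such as $\ell^1\cap\ell^2$. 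For (i), since $|e(p(n))|=1$ the maximal operator is pointwise dominated by LaVictoire's unweighted maximal operator, whose a.s.\ weak $(1,1)$ bound is the result quoted above in the excerpt.

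The principal analytic input is an almost sure uniform estimate $\|m_N^\omega\|_{L^\infty(\T)} \lesssim (\log N)^{-C}$ for some $C>0$. The hypotheses on $p$ are designed so that every $\beta\in\T$ behaves like a minor arc for the phase $p(n)-a_n(\omega)\beta$: condition (2) is the Hardy-field analogue of Weyl's non-commensurability hypothesis, ruling out any rational approximant of $\beta$ producing stationary-phase behaviour against $p$; condition (1) is the second-order difference bound that feeds into a van der Corput / Weyl differencing square function. The randomness enters by first comparing $m_N^\omega(\beta)$ to a deterministic surrogate in which $a_n(\omega)$ is replaced by its leading-order asymptotic $n^{1/(1-a)}$, via a concentration and chaining argument over $\T$ that exploits $a<1/2$; the surrogate is then estimated by stationary phase in the $p$-variable using (1)--(2).

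Given the multiplier decay, convergence for $f\in\ell^2$ along the lacunary subsequence $N_k := \lfloor \rho^k\rfloor$ follows from a Rademacher--Menshov/square-function argument, since $\sum_k \|m_{N_k}^\omega\|_\infty^2 < \infty$; combined with the a.s.\ weak $(1,1)$ maximal bound this upgrades to all $f\in\ell^1$. Filling in the gaps $N_k \le N < N_{k+1}$ reduces to a short-variation estimate, for which the oscillatory weight $e(p(n))$ provides cancellation within each short window. The main obstacle is the uniform in $\beta$ almost sure pointwise decay of $m_N^\omega(\beta)$: one cannot execute a standard major/minor arc split because the random translations $a_n(\omega)$ interact with the rational structure of $\T$, so a uniform chaining argument over $\beta\in\T$ must be combined with quantitative equidistribution coming from (2), and it is here that the flexibility of logarithmico-exponential growth becomes delicate.
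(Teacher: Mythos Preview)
Your outline diverges substantially from the paper and has a genuine gap at its core. The paper does \emph{not} pass to multipliers or do any Fourier/circle-method analysis; it works directly in $L^{2}(X)$. After the same $L^{1}$-to-$L^{\infty}$ and lacunary reductions you describe, it rewrites the average as $\frac{1}{W_N}\sum_{n\le N} X_n e(p(S_n)) T^n f$ and splits $X_n=\sigma_n+Y_n$. The deterministic $\sigma_n$-piece is handled by summation by parts plus the Birkhoff splitting $f=\bar f + (Th-h)$, ultimately reducing to the purely deterministic fact $\frac{1}{N}\sum_n e(p(n))\to 0$ from \cite{MR1269206}; this is the only place hypothesis~(2) is used. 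The centered $Y_n$-piece is the main work: one applies van der Corput first in $L^{2}(X)$ and then in $L^{2}(\Omega)$, and hypothesis~(1) is invoked exactly once, with $x=S_{n+t-1}$, $y=X_{n+t}$, $z=S_{n+t+1,n+t+s}$, to remove the dependence of the phase on the single variable $X_{n+t}$. This makes $Y_{n+t}$ independent of everything else, so the main term vanishes because $\E Y_{n+t}=0$. The restriction $a<1/2$ enters only as the numerical condition $c>2a$ with $c<1$ in the resulting exponent count.

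The gap in your proposal is the uniform multiplier bound $\|m_N^\omega\|_{L^\infty(\T)}\lesssim (\log N)^{-C}$, which you sketch but do not establish, and the sketch has a structural problem. Your plan is to replace $a_n(\omega)$ by its deterministic asymptotic $\sim n^{1/(1-a)}$ and treat the error by concentration and chaining. But the fluctuation $a_n(\omega)-\E a_n$ is of order $n^{1/(2(1-a))}$, so for $|\beta|$ not extremely small the phase $e(-a_n(\omega)\beta)$ is nowhere near $e(-\E a_n\,\beta)$; the surrogate comparison simply fails on most of $\T$. Moreover, after the natural rewriting $m_N^\omega(\beta)=\frac{1}{N}\sum_k X_k\, e(p(S_k))\, e(-k\beta)$, the summands are not independent (each $e(p(S_k))$ depends on all $X_j$, $j\le k$), so a direct Bernstein/chaining bound is unavailable. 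This is precisely the difficulty the paper's double van~der~Corput / decoupling step is engineered to bypass; your outline does not contain a mechanism that replaces it.
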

Pointwise ergodic theorems with exponential polynomial weights are collectively known as Wiener--Wintner type theorems, see e.g.\ \cite{MR1995517} for linear polynomials and \cite{MR1257033} for general polynomials.
If the random sequence $\{a_{n}\}$ is replaced by the linear sequence $\{n\}$ in Theorem~\ref{MAIN}, the result follows from the Wiener--Wintner theorem for Hardy field functions due to Eisner and the first author \cite{2014arXiv1407.4736E}.
However, note that the full measure sets in our result depend on the choice of $p$.
It would be interesting to remove this dependence.
Also, the second order difference relation in the hypothesis of Theorem~\ref{MAIN} can likely be replaced by a polynomial growth assumption; this would require an inductive application of van der Corput's inequality.

The structure of this paper is as follows:\\
In \textsection\ref{sec:preliminaries} we introduce a few preliminary tools, discuss our proof strategy, and reduce our theorem to proving Proposition \ref{key}; and \\
In \textsection\ref{sec:proof} we prove Proposition \ref{key}, thereby completing the proof of Theorem \ref{MAIN}.

\subsection{Acknowledgments}
We are grateful to Nikos Frantzikinakis, Rowan Killip, and Alexander Volberg for helpful conversations, and to Tanja Eisner for suggesting the collaboration.
We thank the Hausdorff Research Institute for Mathematics for hospitality during the Trimester Program ``Harmonic Analysis and Partial Differential Equations''.

\section{Preliminaries}
\label{sec:preliminaries}
\subsection{Notation and tools}

With $X_n, \ \sigma_n$ as above, we let $Y_n := X_n - \sigma_n$.

We will be dealing with sums of random variables, so we introduce the following compact notation:
\[ S_{N} =\sum_{n=1}^{N} X_{N} \ \text{ and } \ S_{M,N}=\sum_{n=M}^{N} X_{n}.\]
We also let
\[ W_N := \sum_{n=1}^N \sigma_n,\]
so that $W_N$ grows as $N^{1-a}$.

We will make use of the modified Vinogradov notation. We use $X \lesssim Y$, or $Y \gtrsim X$ to denote the estimate $X \leq CY$ for an absolute constant $C$. If we need $C$ to depend on a parameter,
we shall indicate this by subscripts, thus for instance $X \lesssim_\omega Y$ denotes
the estimate $X \leq C_\omega Y$ for some $C_\omega$ depending on $\omega$.

We also make use of big-O notation: we let $O(Y)$ denote a quantity that is $\lesssim Y$, and similarly $O_\omega(Y)$ a quantity that is $\lesssim_\omega Y$.

The main probabilistic input in our argument is the following special case of Chernoff's inequality.
\begin{lemma}[see e.g.\ \cite{MR2573797}]
\label{lem:chernoff}
Let $\{X_n\}$, $\{ \sigma_n \}$ be as above. There exists an absolute constant $c >0$ so that for each $A >0$,
\[ \mathbb{P} \Big( | S_N - W_N | \geq A \Big) \lesssim \max\Big \{ \exp \Big(-c \frac{A^2}{W_N} \Big), \exp(-c A) \Big\}.\]
Consequently,
\[
\mathbb{P} \Big( |S_{N} - W_{N}| \geq \frac12 W_N \Big)
\lesssim
\exp(-c W_N)
\lesssim
\exp(-c N^{1-a}).
\]
\end{lemma}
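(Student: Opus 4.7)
The plan is to apply the classical exponential moment (Chernoff) method to the mean-zero sum $S_N - W_N = \sum_{n=1}^N Y_n$, exploiting independence of the Bernoullis. For each $\lambda \geq 0$, Markov's inequality gives
\[
\mathbb{P}(S_N - W_N \geq A) \leq e^{-\lambda A} \prod_{n=1}^N \mathbb{E}[e^{\lambda Y_n}].
\]
Since $Y_n$ takes the values $1-\sigma_n$ and $-\sigma_n$ with probabilities $\sigma_n$ and $1-\sigma_n$, a direct computation together with the inequality $1 + u \leq e^u$ yields the sharp bound $\mathbb{E}[e^{\lambda Y_n}] \leq \exp(\sigma_n(e^\lambda - 1 - \lambda))$. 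Multiplying gives the clean estimate
\[
\mathbb{P}(S_N - W_N \geq A) \leq \exp\bigl(-\lambda A + W_N(e^\lambda - 1 - \lambda)\bigr),
\]
so the problem reduces to optimizing in $\lambda$.

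The remaining, essentially routine, work is a case split according to the size of $A$ relative to $W_N$. In the small-deviation regime $A \leq W_N$, I would restrict to $\lambda \in (0,1]$ and use $e^\lambda - 1 - \lambda \leq \lambda^2$; choosing $\lambda = A/(2W_N) \leq 1/2$ then produces the sub-Gaussian bound $\exp(-cA^2/W_N)$. In the large-deviation regime $A > W_N$, I would fix $\lambda$ to be a small absolute constant (or more precisely $\lambda = \log(1 + A/W_N)$) and use the inequality $A > W_N$ to absorb the contribution $W_N(e^\lambda - 1)$ into a term bounded by $-cA$. Applying the same argument to $-(S_N - W_N)$ controls the lower tail, and summing the two bounds yields the stated estimate for $|S_N - W_N|$. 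The only real subtlety is the case split near $A \sim W_N$; once that is handled the rest is bookkeeping.

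The consequence is then immediate. Setting $A = W_N/2$ in the main bound gives
\[
\max\bigl\{\exp(-cA^2/W_N), \exp(-cA)\bigr\} = \max\bigl\{\exp(-cW_N/4), \exp(-cW_N/2)\bigr\} \lesssim \exp(-c' W_N)
\]
for a suitable $c' > 0$. Finally, since $\sigma_n = n^{-a}$ with $0 < a < 1/2$, comparison of $W_N = \sum_{n \leq N} n^{-a}$ with the integral $\int_1^N x^{-a}\,dx = (N^{1-a}-1)/(1-a)$ gives $W_N \gtrsim N^{1-a}$, which completes the argument.
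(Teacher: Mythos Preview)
Your argument is correct: this is exactly the standard exponential-moment (Chernoff) derivation, and the case split at $A\sim W_N$ together with the lower-tail symmetry are handled properly. Note, however, that the paper does not actually prove this lemma; it is quoted as a known concentration inequality with a reference, so there is no ``paper's proof'' to compare against beyond observing that your sketch supplies precisely the textbook argument the citation points to.
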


This also implies the following version of the law of large numbers:
\begin{equation}
\label{eq:lln}
S_{N}/W_{N} \to 1
\quad\text{almost surely.}
\end{equation}

We will also need the Hilbert space van der Corput inequality.
\begin{lemma}[{see e.g.\ \cite{MR3043589}}]
Let $\{ v_{n}\} $ be a sequence in a Hilbert space $H$ and $1\leq M\leq N$.
Then
\begin{equation}
\label{vdC}
\Big\| \sum_{n=1}^{N} v_{n} \Big\|^{2}\
\leq
2 \frac{N}{M} \sum_{n=1}^{N} \| v_{n} \|^{2} +  4 \frac{N}{M} \sum_{m=1}^{M} \Big| \sum_{n=1}^{N-m} \< v_{n+m}, v_{n} \> \Big|
\end{equation}
\end{lemma}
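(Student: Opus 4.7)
The plan is to use the classical averaging trick. Set $S := \sum_{n=1}^N v_n$, extend $v_n = 0$ for $n \notin \{1,\dots,N\}$, and consider the shifted window sums $w_n := \sum_{j=0}^{M-1} v_{n-j}$, which are supported in $\{1, \dots, N+M-1\}$. A direct counting argument shows $\sum_{n=1}^{N+M-1} w_n = MS$, since each $v_k$ with $1\leq k\leq N$ contributes to exactly $M$ of the $w_n$'s.

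Next I would apply Cauchy--Schwarz (the squared triangle inequality) in the Hilbert space:
\[
M^2 \|S\|^2 \;=\; \Big\| \sum_{n=1}^{N+M-1} w_n \Big\|^2 \;\leq\; (N+M-1) \sum_{n=1}^{N+M-1} \|w_n\|^2.
\]
Expanding $\|w_n\|^2 = \sum_{j_1, j_2 = 0}^{M-1} \langle v_{n-j_1}, v_{n-j_2}\rangle$ and summing in $n$, the diagonal terms $j_1 = j_2$ contribute $M \sum_{n=1}^N \|v_n\|^2$. For the off-diagonal terms I group by $m = j_1 - j_2$: each $m \in \{-(M-1), \dots, M-1\} \setminus \{0\}$ appears with multiplicity $M - |m|$. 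After an index shift, Hermitian symmetry $\langle v_n, v_{n-m}\rangle = \overline{\langle v_{n-m}, v_n\rangle}$ pairs the $\pm m$ contributions into twice the real part, and bounding the real part by the absolute value yields
\[
\sum_{n=1}^{N+M-1} \|w_n\|^2 \;\leq\; M \sum_{n=1}^N \|v_n\|^2 + 2\sum_{m=1}^{M-1} (M-m) \Big| \sum_{n=1}^{N-m} \langle v_{n+m}, v_n\rangle \Big|.
\]

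Finally I would divide by $M^2$, use $N + M - 1 \leq 2N$ (which holds because $M \leq N$) together with $M - m \leq M$, and enlarge the range from $m \leq M-1$ to $m \leq M$ (which only adds a non-negative term) to arrive at the stated inequality. There is no serious obstacle here; the argument is a textbook manipulation of sums. The only steps requiring a bit of care are the bookkeeping of index ranges after zero-padding and invoking Hermitian symmetry correctly so that the final off-diagonal expression carries absolute values around $\sum_{n=1}^{N-m} \langle v_{n+m}, v_n\rangle$ rather than merely real parts.
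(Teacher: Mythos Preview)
Your argument is correct and is the standard proof of the Hilbert-space van der Corput inequality: zero-pad, average over $M$ shifts, apply Cauchy--Schwarz over the $N+M-1$ terms, expand the squares, group the cross terms by lag, and clean up with $N+M-1\le 2N$ and $M-m\le M$. The bookkeeping you flag (index ranges after padding, Hermitian pairing of the $\pm m$ lags) is handled correctly, and the final enlargement from $m\le M-1$ to $m\le M$ is harmless.

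There is nothing to compare against in the paper: the authors do not prove the lemma but merely cite it (``see e.g.\ \cite{MR3043589}''). Your write-up supplies exactly the textbook argument one would expect behind such a citation.
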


\subsection{Strategy}
In proving his Random Ergodic Theorem, LaVictoire showed \cite{MR2576702} that on a set of full probability, $\Omega' \subset \Omega$, the maximal function
\[
f \mapsto \sup_N \frac{1}{N} \sum_{n=1}^N T^{a_n(\omega)}|f|,
\]
is weakly bounded on $L^1(X)$.
In particular, for $\omega \in \Omega'$ the set of $f \in L^1(X)$ for which the averages
\[
\frac{1}{N} \sum_{n=1}^N e(p(n)) T^{a_n(\omega)} f
\]
tend to zero pointwise a.e.\ is closed in $L^1$.
Hence it will be enough to prove pointwise convergence for $f \in L^\infty(X)$.
Now, as observed in \cite{MR1325697}, for bounded functions it is enough to prove convergence along every lacunary sequence $\lac = (\lfloor \rho^k \rfloor, k\in\N)$, where $\rho>1$ is taken from a countable sequence converging to $1$.

We will fix some $\rho > 1$ throughout, and the averaging parameters $N$ are assumed to belong to $\lac$ unless mentioned otherwise.

We follow a similar plan to \cite{MR3043589}.
We will prove Theorem \ref{MAIN} by showing that almost surely, for every measure-preserving system $(X,\mu,T)$ and every $f\in L^{\infty}(X)$, the following chain of asymptotic equivalences holds $\mu$-almost everywhere:
\begin{align}
\frac{1}{N}\sum_{n=1}^N e(p(n)) T^{a_n}f
&\approx\label{sim:def}
\frac{1}{S_N}\sum_{n=1}^N X_n(\omega) e(p(S_n)) T^nf  \\
&\approx\label{sim:lln}
\frac{1}{W_N}\sum_{n=1}^N X_n(\omega) e(p(S_n)) T^nf \\
&\approx\label{sim:summable}
\frac{1}{W_N}\sum_{n=1}^N \sigma_n e(p(S_n)) T^nf \\
&\approx\label{sim:ergodic-av}
\bar{f} \cdot \frac{1}{W_N}\sum_{n=1}^N \sigma_n e(p(S_n)) \\
&\approx\label{sim:reverse}
\bar{f} \cdot \frac{1}{N} \sum_{n=1}^N e(p(n)) \\
&\approx\label{sim:zero}
0.
\end{align}
Here, the symbol $\approx$ means that the difference converges to $0$ as $N\to\infty$ and $\bar{f} := \lim_N \frac{1}{N} \sum_{n=1}^{N} T^n f$ is the projection of $f$ onto the invariant factor of $T$.

Let us now list the ingredients used to establish the above asymptotic equivalences.
\begin{enumerate}
\item[\eqref{sim:def}] holds because the right-hand side equals the left-hand side with $N$ replaced by $S_{N}$.
\item[\eqref{sim:lln}] holds by \eqref{eq:lln}.
\item[\eqref{sim:summable}] is the key to our argument.
We isolate this crucial step in the following
\begin{proposition}\label{key}
In the setting of Theorem~\ref{MAIN}, almost surely the following holds: for each measure-preserving system $(X,\mu,T)$, and each $f \in L^2(X)$, the sequence
\[
\Big\| \frac{1}{W_N} \sum_{n=1}^N Y_n(\omega) e(p(S_n)) T^nf \Big \|_{L^2(X)}^2
\]
is summable over lacunary $N$, and in particular
\[
\frac{1}{W_N} \sum_{n=1}^N Y_n(\omega) e(p(S_n)) T^nf \to 0 \quad \mu\text{-a.e.}
\]
\end{proposition}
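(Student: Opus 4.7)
The plan is to reduce to a system-independent $L^\infty(\T)$-estimate on the random Fourier symbol
\[
M_N(\theta,\omega) := \frac{1}{W_N}\sum_{n=1}^{N} Y_n(\omega)\, e\bigl(p(S_n(\omega)) + n\theta\bigr),
\]
a trigonometric polynomial of degree $\le N$. The spectral theorem for the Koopman operator of $T$ on $L^2(X)$ yields
\[
\Big\|\frac{1}{W_N}\sum_{n=1}^{N} Y_n(\omega)\, e(p(S_n))\, T^n f\Big\|_{L^2(X)}^{2} \le \|M_N(\cdot,\omega)\|_{L^\infty(\T)}^{2}\,\|f\|_{L^2(X)}^{2},
\]
so it suffices to show $\sum_{N\in\lac}\|M_N(\cdot,\omega)\|_\infty^{2} < \infty$ a.s. A Bernstein-type discretization replaces $\|M_N\|_\infty$ by $\max_{\theta\in\Theta_N}|M_N(\theta)|$ over a grid $\Theta_N\subset\T$ with $|\Theta_N|\lesssim N$, and then Chebyshev, union bound, and Borel--Cantelli reduce the problem to a uniform moment bound $\sup_{\theta}\mathbb{E}|M_N(\theta,\omega)|^{2q} \lesssim_q W_N^{-q\gamma}$ for some $q\in\N$ and $\gamma>0$ with $q\gamma(1-a)>1$; since $W_N\asymp N^{1-a}$, such $q,\gamma$ are easy to accommodate.

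The moment bound is the heart of the argument. The main difficulty is that $e(p(S_n))$ is $\sigma(Y_1,\ldots,Y_n)$-measurable, so the summands of $M_N$ are not independent and Khintchine's inequality is inapplicable. To break the coupling I would apply the van der Corput inequality~\eqref{vdC} to $v_n := Y_n\,e(p(S_n) + n\theta)\in\C$, reducing matters to the bilinear sums
\[
T_m(\omega) := \sum_{n} Y_n Y_{n+m}\, e\bigl(p(S_{n+m}) - p(S_n)\bigr),\qquad 1 \le m \le M,
\]
for a suitably chosen $M$. Hypothesis~(1) on $p$, applied with $x = W_n$, $y = S_n - W_n$, $z = \Delta_{n,m} := S_{n+m} - S_n$, furnishes the essential decoupling
\[
p(S_{n+m}) - p(S_n) = \bigl[p(W_n + \Delta_{n,m}) - p(W_n)\bigr] + O\bigl(W_n^{\epsilon-1}\,|S_n - W_n|\,\Delta_{n,m}\bigr).
\]
Since $\Delta_{n,m}$ is independent of $\mathcal{F}_n := \sigma(X_1,\ldots,X_n)$, and Chernoff (Lemma~\ref{lem:chernoff}) gives $|S_n - W_n| \lesssim W_n^{1/2+o(1)}$ with overwhelming probability, choosing $M$ as a small positive power of $N$ renders the error phase negligible off a negligible exceptional event. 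The leading bilinear term then becomes a weighted sum of $Y_n Y_{n+m}$ against factors independent of $\mathcal{F}_n$; grouping indices into independent blocks of width $\asymp M$ and applying a Rosenthal/Khintchine $2q$-th moment inequality yields the required decay.

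The main obstacle I foresee is balancing $M$: larger $M$ improves the bilinear cancellation furnished by van der Corput, but smaller $M$ is needed to dominate the second-order error in the phase expansion by the typical $W_n^{1/2+o(1)}$ fluctuations of $S_n - W_n$. Tuning this balance against the exponents $a$ and $\epsilon$ is the delicate step. Once the moment estimate is secured, Borel--Cantelli along $N \in \lac$ gives almost-sure summability of $\|M_N\|_\infty^2$, and Fubini on $X\times\Omega$ promotes this to $\mu$-a.e.\ summability of the pointwise $L^2$-norms, hence pointwise a.e.\ convergence to $0$.
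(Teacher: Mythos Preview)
Your overall architecture---spectral transference, discretization, moment bounds, Borel--Cantelli---is reasonable, and after one van der Corput the $\theta$-dependence indeed drops out, so you arrive at exactly the bilinear object $T_m=\sum_n Y_nY_{n+m}\,e\bigl(p(S_{n+m})-p(S_n)\bigr)$ that the paper also has to control (its Lemma~\ref{lem:crit} plays the role of your spectral reduction plus first van der Corput). The gap is in how you then decouple the phase.

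You invoke the second-order hypothesis \eqref{eq:p} with $x=W_n$, $y=S_n-W_n$, $z=\Delta_{n,m}$, producing a phase error $O\bigl(W_n^{\epsilon-1}\,|S_n-W_n|\,\Delta_{n,m}\bigr)$. But $|S_n-W_n|$ is typically of order $W_n^{1/2}$, so the error is of order $W_n^{\epsilon-1/2}\,\Delta_{n,m}$. For $\epsilon>1/2$ this diverges even when $\Delta_{n,m}=1$: e.g.\ for $p(x)=x^{1.9}$ and $a=0.1$ one gets an error $\gtrsim n^{0.36}$. No tuning of $M$ helps, since already $m=1$ is fatal. The underlying issue is that you are using a \emph{second}-difference hypothesis to control a \emph{first} difference $p(S_{n+m})-p(S_n)$; this forces $y$ to be as large as the central-limit fluctuation of $S_n$, which \eqref{eq:p} cannot absorb across the full range $0<\epsilon<1$. (A minor additional point: \eqref{eq:p} is only assumed for $y>0$, whereas $S_n-W_n$ may be negative.)

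The paper's remedy is to apply Cauchy--Schwarz and then a \emph{second} van der Corput, now in $L^2(\Omega)$, before any decoupling. This produces a genuine second difference $p(S_{n+s+t})-p(S_{n+t})-p(S_{n+s})+p(S_n)$ in the phase---precisely the shape of \eqref{eq:p}---and allows the choice $x=S_{n+t-1}$, $y=X_{n+t}\in\{0,1\}$, $z=S_{n+t+1,n+t+s}$. Now $|y|\le 1$, so the error $O\bigl(S_{n+t-1}^{\epsilon-1}\cdot z\bigr)$ is small for every $\epsilon<1$; moreover the leading phase becomes independent of the single variable $X_{n+t}$, so the factor $Y_{n+t}$ splits off with mean zero and the main term vanishes identically, with no Rosenthal-type argument needed. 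Your block-independence plan for the leading term is plausible on its own, but the decoupling that feeds it breaks down for $\epsilon\ge 1/2$.
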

\item[\eqref{sim:ergodic-av}] for averages with weights $\sigma_{n}$ follows by the partial summation formula
\[
\frac{1}{W_{N}} \sum_{n=1}^{N} \sigma_{n} a_{n}
=
\frac{N\sigma_{N}}{W_{N}} A_{N} + \sum_{M=1}^{N-1} \frac{M(\sigma_{M}-\sigma_{M+1})}{W_{N}} A_{M},
\quad
A_{N} = \frac{1}{N} \sum_{n=1}^{N} a_{n},
\]
from the following result on unweighted averages with $G=e\circ p$:
\begin{lemma}
\label{lem:ergodic-av}
Suppose $0<a<1$.
Then, almost surely, for every measure-preserving system $(X,\mu,T)$ and every $f\in L^{1}(X,\mu)$ pointwise $\mu$-a.e.\ we have
\[
\frac{1}{N}\sum_{n=1}^N G(S_n) T^nf
\approx
\bar{f} \cdot \frac{1}{N}\sum_{n=1}^N G(S_n)
\]
for every bounded function $G:\N\to\R$ as $N\to\infty$.
\end{lemma}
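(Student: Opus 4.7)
The plan is a Banach-principle / density argument. Subtracting the $T$-invariant function $\bar f$ from $f$, the claim reduces to showing that for a.e.\ $\omega$, every measure-preserving system $(X,\mu,T)$ and every $f\in L^{1}(X,\mu)$ with $\bar f=0$, we have $\frac{1}{N}\sum_{n=1}^N G(S_n)T^n f\to 0$ pointwise $\mu$-a.e., uniformly over bounded $G$. The associated maximal operator is pointwise dominated by $\|G\|_\infty$ times the Birkhoff maximal function and hence is of weak type $(1,1)$, with constant controlled by $\|G\|_\infty$.

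By the mean ergodic theorem combined with truncation, the $L^\infty$-coboundaries $\{h-Th:h\in L^\infty(X,\mu)\}$ are $L^1$-dense in the mean-zero subspace of $L^1(X,\mu)$. For $f=h-Th$, telescoping (Abel summation) yields
\[
\sum_{n=1}^N G(S_n)(T^nh-T^{n+1}h) = G(S_1)Th - G(S_N)T^{N+1}h + \sum_{n=2}^N (G(S_n)-G(S_{n-1}))\,T^nh.
\]
The key observation is that $G(S_n)-G(S_{n-1})$ vanishes unless $X_n=1$, and is bounded by $2\|G\|_\infty$ otherwise, so the whole sum is pointwise bounded by $2\|G\|_\infty\|h\|_\infty(1+S_N)$. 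Dividing by $N$ and invoking the law of large numbers \eqref{eq:lln}, which gives $S_N/N\sim N^{-a}\to 0$ almost surely, we get convergence to $0$ uniformly in $x$ and in $G$ (on $\|G\|_\infty$-bounded sets), for $\omega$ in a probability-one set.

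The standard Banach-principle approximation (choose $f_k$ in the dense subspace with $\|f-f_k\|_{L^1}\to 0$ rapidly, dominate the remainder by $2\|G\|_\infty\cdot(f-f_k)^*$, and extract a subsequence where $(f-f_k)^*\to 0$ $\mu$-a.e.) converts this into pointwise convergence for every $f\in L^1$, on a $\mu$-full-measure set that does not depend on $G$. The full-measure $\omega$-set is the LLN set $\{S_N/N\to 0\}$ and is independent of the system. The only substantive step is the coboundary estimate above, which crucially exploits the sublinear growth $S_N=o(N)$; the remaining ingredients are routine functional-analytic scaffolding.
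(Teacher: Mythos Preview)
Your proof is correct and follows essentially the same route as the paper's: reduce by the maximal ergodic theorem to the $L^{1}$-dense class of $L^{\infty}$-coboundaries, then use Abel summation together with the observation that $G(S_{n})-G(S_{n-1})$ vanishes when $X_{n}=0$, so the sum is bounded by a constant times $S_{N}$ and $S_{N}/N\to 0$ almost surely. The only cosmetic differences are your explicit treatment of the Banach-principle step and of the uniformity in $G$, which the paper leaves implicit.
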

This is a slight abstraction from \cite[Lemma 2.2]{MR3043589}, where a different function $G$ was specified (but its special form not used in the proof).
For completeness, the proof is reproduced below.

\item[\eqref{sim:reverse}] follows by applying the above steps in reverse order, with $f = 1_X$; and
\item[\eqref{sim:zero}] reduces to a statement about trigonometric sums, namely $\frac{1}{N} \sum_{n=1}^N e(p(n)) \to 0$, which was proved in \cite[Theorem 1.3]{MR1269206}.
\end{enumerate}

\begin{proof}[Proof of Lemma~\ref{lem:ergodic-av}]
By the usual maximal ergodic theorem, for each fixed $\omega$ the set of $f$ for which asymptotic equivalence holds a.e.\ is closed in $L^{1}(X)$.
Since the equivalence is clear in the case $f=\bar f$ and in view of the splitting $L^{2}(X) = \{f=\bar f\} \oplus \overline{\{ Th-h, h\in L^{\infty}(X)\}}$, it suffices to consider the case when $f=Th-h$, $h\in L^{\infty}$, is a coboundary, so that in particular $\bar f = 0$.
Since $f\in L^{\infty}$ in this case, it suffices to obtain equivalence for $N\in\lac$ with $\rho>1$ fixed but arbitrary.

Summation by parts gives
\[
\frac{1}{N}\sum_{n=1}^N G(S_n) T^n(h-Th)
=
O(\|G\|_{\infty}/N) + \frac{1}{N} \sum_{n=1}^{N-1} (G(S_n) - G(S_{n+1})) T^n h.
\]
The first summand is deterministic and converges to $0$.
The second summand is $\mu$-a.e.\ bounded by
\[
2\|G\|_{\infty}\|h\|_{\infty} \frac{1}{N} \sum_{n=1}^{N-1} X_{n+1}
\leq
2\|G\|_{\infty}\|h\|_{\infty} \frac{S_{N}}{N},
\]
and this converges to $0$ almost surely in view of \eqref{eq:lln}.
\end{proof}
With this reduction complete, we now turn to the proof of Proposition \ref{key}.

\section{Proof of Proposition \ref{key}}
\label{sec:proof}
Throughout this section, we will view $0< \delta \ll 1$ as a (small) floating parameter, whose precise value will be fixed at the end of the proof; $0 < \nu = \nu(\delta) = O(\delta)$ will be used to denote (possibly different) parameters (all of which grow linearly in $\delta$); $0< \kappa = O(\delta)$ will be used similarly.

We begin with a criterion that guarantees that a bounded sequence $\{c_n\}$ is a good sequence of weights for a pointwise ergodic theorem along a lacunary sequence.

\begin{lemma}
\label{lem:crit}
Let $0<a<b<1$ and fix $\rho>1$.
Let $\{c_n\}$ be a bounded sequence such that the following holds:
\begin{equation}
\label{eq:c-sum}
\sum_{n=1}^{N} |c_{n}| \lesssim N^{1-a},
\quad N\in\lac,
\quad\text{and}
\end{equation}
\begin{equation}
\label{eq:c-corr}
\sum_{N\in\lac} N^{2a-1-b}
\sum_{m=1}^{N^{b}} \Big| \sum_{n=N^{1-\delta}}^{N-m} c_{n+m} \bar c_{n} \Big|
< \infty.
\end{equation}
Then for every measure-preserving system $(X,\mu,T)$ and $f\in L^{2}(X)$ we have
\[
\sum_{N\in\lac} \Big\| \frac1{N^{1-a}} \sum_{n=1}^{N} c_{n} T^{n}f  \Big\|_{L^2(X)}^{2} < \infty.
\]
\end{lemma}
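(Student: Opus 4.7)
The plan is to apply the Hilbert space van der Corput inequality~\eqref{vdC} to $v_n := c_n T^n f \in L^2(X)$ with cutoff $M := \lfloor N^{b} \rfloor$, and show that both resulting terms, once divided by $N^{2(1-a)}$, are summable over $N\in\lac$. Since the Koopman operator is unitary on $L^{2}$, the inner products decouple as $\langle v_{n+m}, v_n \rangle = c_{n+m}\bar{c}_n\,\langle T^m f, f\rangle$ with $|\langle T^{m}f,f\rangle|\leq \|f\|_{L^{2}}^{2}$, so the off-diagonal sum reduces to a Hilbert-space factor times a purely arithmetic correlation sum of the $c_n$.

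For the diagonal contribution, the estimate $|c_n|^2 \leq \|c\|_\infty |c_n|$ combined with \eqref{eq:c-sum} gives $\sum_{n=1}^N \|v_n\|^2 \lesssim \|f\|_{L^2}^{2}\, N^{1-a}$; multiplying by $N/M \lesssim N^{1-b}$ and dividing by $N^{2(1-a)}$ produces the bound $\lesssim \|f\|_{L^2}^{2}\, N^{a-b}$, which is summable over $N \in \lac$ because $b > a$. For the off-diagonal contribution I split $\sum_{n=1}^{N-m} c_{n+m}\bar c_n$ at the threshold $n = \lfloor N^{1-\delta}\rfloor$. The long range $N^{1-\delta}\leq n \leq N-m$, after summation over $1\leq m\leq N^b$ and multiplication by the prefactor $4 N^{1-b}/N^{2(1-a)} = 4 N^{2a-1-b}$, is (up to the factor $\|f\|_{L^2}^{2}$) precisely the summand in \eqref{eq:c-corr} and thus summable by hypothesis. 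For the short range $1 \leq n \leq N^{1-\delta}$ I apply \eqref{eq:c-sum} at a lacunary scale $N' \in \lac$ with $N^{1-\delta} \leq N' \leq \rho N^{1-\delta}$ to get $\bigl|\sum_{n=1}^{\lfloor N^{1-\delta}\rfloor} c_{n+m}\bar c_n\bigr| \leq \|c\|_\infty \sum_{n=1}^{\lfloor N^{1-\delta}\rfloor}|c_n|\lesssim N^{(1-\delta)(1-a)}$, and summing over $m\leq N^b$ then multiplying by $N^{2a-1-b}$ yields the bound $\lesssim N^{a - \delta(1-a)}$.

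The main obstacle is parameter balancing: the short-range exponent $a - \delta(1-a)$ is negative precisely when $\delta > a/(1-a)$, which is compatible with $\delta < 1$ exactly in the regime $a < 1/2$ relevant to Proposition~\ref{key}. Choosing the section's floating parameter $\delta \in (a/(1-a), 1)$, together with any $b \in (a,1)$, then makes the diagonal, short-range, and long-range contributions to $\sum_{N\in\lac} \bigl\|N^{a-1}\sum_{n=1}^N c_n T^n f\bigr\|_{L^2}^2$ all summable, completing the proof.
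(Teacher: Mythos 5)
Your plan is close to the paper's in spirit, but the way you insert the initial-segment cutoff is genuinely lossy and produces a constraint on $\delta$ that is incompatible with the rest of the paper.

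The paper removes the segment $1\le n\le N^{1-\delta}$ \emph{before} invoking van der Corput, via the trivial $L^2$ bound
\[
\Big\| \frac1{N^{1-a}} \sum_{n=1}^{N^{1-\delta}} c_{n} T^{n}f  \Big\|_{L^2(X)}
\le \frac{1}{N^{1-a}}\sum_{n\le N^{1-\delta}}|c_n|\,\|f\|_{L^2}
\lesssim N^{-\delta(1-a)}\|f\|_{L^2},
\]
so the squared error $N^{-2\delta(1-a)}\|f\|^2$ is summable over $\lac$ for \emph{any} $\delta>0$. You instead apply van der Corput with $M=N^b$ to the full sum $\sum_{n=1}^N c_nT^nf$ and then absorb the short range $n\le N^{1-\delta}$ inside the off-diagonal term. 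This costs a factor $N/M=N^{1-b}$ that is calibrated to the full length $N$, not to the much shorter segment of length $N^{1-\delta}$, and the resulting contribution is $N^{a-\delta(1-a)}$, as you computed. Indeed, before dividing by $N^{2(1-a)}$, your off-diagonal bound for the short range is $\lesssim N^{1+(1-\delta)(1-a)}\|f\|^2$ while the paper's direct bound is $\lesssim N^{2(1-\delta)(1-a)}\|f\|^2$, and the ratio $N^{\,1-(1-\delta)(1-a)}$ is a positive power of $N$. Consequently your argument only closes when $\delta>a/(1-a)$.

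That restriction is not harmless. The section declares $0<\delta\ll 1$ a \emph{small} floating parameter, and the proof of Proposition~\ref{prop:as-corr} verifies \eqref{eq:c-corr} only under the constraint $b-a+\nu+(1-a)(\epsilon-1)<0$ with $\nu\sim a\delta$ (coming from replacing $n^{-a}$ by $N^{-a(1-\delta)}$ over the range $n\ge N^{1-\delta}$). Forcing $\delta>a/(1-a)$ makes $\nu>a^2/(1-a)$ and so, even taking $b\to a^+$, one needs $a^2/(1-a)<(1-a)(1-\epsilon)$, i.e.\ $\epsilon<1-\bigl(\tfrac{a}{1-a}\bigr)^2$. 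This is a real restriction on $\epsilon$ (it excludes, e.g., $a=0.4$, $\epsilon=0.9$), contradicting the stated hypothesis $0<\epsilon<1$ arbitrary. So the version of the lemma you have proved cannot be fed into Proposition~\ref{prop:as-corr}. The fix is exactly the paper's first step: peel off $n\le N^{1-\delta}$ with the trivial $L^2$ bound and apply van der Corput only to $\sum_{n=N^{1-\delta}}^{N} c_nT^nf$; then no lower bound on $\delta$ is needed.
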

\begin{proof}
Note that \eqref{eq:c-sum} with $N\in\lac$ implies \eqref{eq:c-sum} with $N\in\N$, and we obtain
\begin{align*}
\Big\| \frac1{N^{1-a}} \sum_{n=1}^{N^{1-\delta}} c_{n} T^{n}f  \Big\|_{L^2(X)} &\leq
\frac1{N^{1-a}} \sum_{n=1}^{N^{1-\delta}} |c_{n}| \| f \|_{L^2(X)}\\
&\lesssim
\frac1{N^{1-a}} N^{(1-\delta)(1-a)} \| f \|_{L^2(X)} \\
&=
N^{-\delta (1-a)} \| f \|_{L^2(X)},
\end{align*}
so we may replace the sum in the conclusion of the lemma by $\sum_{n=N^{1-\delta}}^{N}$.

Using van der Corput inequality \eqref{vdC} on the Hilbert space $H=L^{2}(X)$ with $M=N^{b}$, estimate
\begin{multline}
\label{eq:vdC1}
\Big\| \frac1{N^{1-a}} \sum_{n=N^{1-\delta}}^{N} c_{n} T^{n}f  \Big\|_{L^2(X)}^{2}\\
\lesssim
N^{2a-2} \frac{N}{N^{b}} \sum_{n=N^{1-\delta}}^{N} \| c_{n} T^{n}f \|_{L^{2}(X)}^{2}
+
N^{2a-2} \frac{N}{N^{b}} \sum_{m=1}^{N^{b}} \Big| \sum_{n=N^{1-\delta}}^{N-m} \int_{X} c_{n+m} T^{n+m} f \bar c_{n} T^{n} \bar{f} \Big|.
\end{multline}
The first term in \eqref{eq:vdC1} is bounded by
\[
N^{2a-2} \frac{N}{N^{b}} \sum_{n=N^{1-\delta}}^{N} | c_{n} |^{2} \|f\|_{L^2(X)}^{2},
\]
and by the assumption \eqref{eq:c-sum} and boundedness of $(c_{n})$ this is $O(N^{a-b})$.
By precomposing with $T^{-n}$, the second term in \eqref{eq:vdC1} is bounded by
\[
N^{2a-1-b} \sum_{m=1}^{N^{b}} \Big| \sum_{n=N^{1-\delta}}^{N-m} c_{n+m} \bar c_{n} \Big| |\<T^{m}f,f\>_{L^2(X)}|,
\]
and this is summable by the assumption \eqref{eq:c-corr}.
\end{proof}

\begin{proposition}
\label{prop:as-corr}
Let $p:\R\to\R$ be a function such that
\begin{equation}
\label{eq:p}
p(x+y+z) - p(x+y) = p(x+z) - p(x) + O(x^{\epsilon-1}yz)
\end{equation}
for $x,y,z>0$.
Let also $0<a<1/2$ and fix $\rho>1$.
Then there exists $b\in (a,1/2)$ such that, almost surely, the sequence $c_{n}=Y_{n} e(p(S_{n}))$ satisfies \eqref{eq:c-corr}.
\end{proposition}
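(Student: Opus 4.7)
The plan is to show that the expected value of the random series on the LHS of \eqref{eq:c-corr} is finite; by Fubini this implies almost sure finiteness. Writing $V(m,N) := \sum_{n=N^{1-\delta}}^{N-m} Y_n Y_{n+m} e(p(S_{n+m}) - p(S_n))$ and applying Jensen's inequality, it suffices to prove the second-moment bound
\[
\mathbb{E}|V(m,N)|^2 \lesssim N^{1-2a-\nu}
\]
for some $\nu > 0$ uniformly in $1 \le m \le N^b$, with $b \in (a, 1/2)$ chosen sufficiently close to $a$. Indeed this yields $\mathbb{E}|V| \lesssim N^{(1-2a-\nu)/2}$, so
\[
\sum_{N\in\lac} N^{2a-1-b} \sum_{m=1}^{N^b} \mathbb{E}|V(m,N)| \lesssim \sum_{N\in\lac} N^{a-1/2-\nu/2} < \infty
\]
since $a < 1/2$.

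To prove the second-moment bound, we expand
\[
\mathbb{E}|V|^2 = \sum_{n,n'} \mathbb{E}\bigl[Y_n Y_{n+m} Y_{n'} Y_{n'+m} e(\Phi_{n,n'})\bigr], \quad \Phi_{n,n'} := [p(S_{n+m}) - p(S_n)] - [p(S_{n'+m}) - p(S_{n'})].
\]
The diagonal $n = n'$ contributes $\sum_n \sigma_n \sigma_{n+m} \lesssim N^{1-2a}$, already on the borderline of the target. Degenerate off-diagonal pairs with $|n-n'| \in \{0, m\}$ (where some of the four indices coincide, leaving only a triple or double $Y$-product) contribute a harmless $O(N^{1-3a})$ term. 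For the generic off-diagonal, we condition on $\mathcal{F} := \sigma(X_j : j \notin \{n, n+m, n', n'+m\})$; iterating $\mathbb{E}[Y_i g(X_i)] = \sigma_i(1-\sigma_i)(g(1) - g(0))$ in the four free Bernoulli variables gives
\[
\mathbb{E}\bigl[Y_n Y_{n+m} Y_{n'} Y_{n'+m} e(\Phi) \mid \mathcal{F}\bigr] = \prod_{j} \sigma_j(1-\sigma_j) \cdot \Delta_a \Delta_b \Delta_c \Delta_d e(\Phi)(0,0,0,0),
\]
with prefactor $\lesssim N^{-4a}$.

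The analytic core is to bound the fourth mixed discrete difference of $e(\Phi)$. Iterated use of the elementary identity
\[
e(g(1,1)) - e(g(1,0)) - e(g(0,1)) + e(g(0,0)) = (e(\alpha)-1)(e(\beta)-1) + e(\alpha + \beta)(e(\gamma) - 1)
\]
(where $\alpha, \beta$ are the first differences of $g$ and $\gamma$ is its mixed second difference), applied first in $(c, d)$ and then in $(a, b)$, reduces the estimate to controlling mixed finite differences of $\Phi$ itself of total order up to four. Since under $\mathcal{F}$ one has $S_\cdot = (\text{$\mathcal{F}$-measurable constant}) + (\text{sum of a subset of the four free variables})$, these mixed differences of $\Phi$ become mixed finite differences of $p$ at scale $S_n \sim n^{1-a}$. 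The hypothesis \eqref{eq:p} controls second-order differences by $O(x^{\epsilon-1})$ times the step-size product; higher-order differences are controlled analogously by the logarithmico-exponential smoothness of $p$, giving $O(x^{\epsilon-k})$ for order $k$. Assembling these bounds, multiplying by the $N^{-4a}$ prefactor, and summing over the $\lesssim N^2$ off-diagonal pairs yields $\mathbb{E}|V|^2 \lesssim N^{1-2a-\nu}$ for some $\nu > 0$ whenever $a < 1/2$ and $\epsilon < 1$, provided $b > a$ is chosen close enough to $a$ that the $m$-dependent factors (bounded by $N^{b-a}$) do not destroy the gain.

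The hard part will be the fourth-difference estimate: the four free Bernoulli variables enter the four terms $p(S_\cdot)$ with multiplicities depending on the relative ordering of $n, n+m, n', n'+m$, so one must run a case analysis and verify in each geometric configuration that the higher-order differences of $p$ supplied by \eqref{eq:p} together with logarithmico-exponential smoothness deliver enough decay in $N$ to beat the $N^{2-4a}$ off-diagonal pair count.
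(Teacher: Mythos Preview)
Your reduction via Fubini and Jensen to a second-moment bound on $V(m,N)$ is exactly how the paper begins (the paper writes $I(m)=\mathbb{E}|V|$ and bounds $I(m)^{2}\le \mathbb{E}|V|^{2}$), so the framework is fine. The gap is in the ``analytic core.''

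For the generic off-diagonal term you propose to bound the fourth mixed discrete difference of $e(\Phi)$ by appealing to ``higher-order differences \dots controlled analogously by the logarithmico-exponential smoothness of $p$, giving $O(x^{\epsilon-k})$ for order $k$.'' But the proposition assumes \emph{only} the second-order relation \eqref{eq:p}; no logarithmico-exponential hypothesis is available, and in particular no bound on third or fourth differences of $p$ is given. Moreover, some of the first differences you would need in your product expansion (e.g.\ the difference of $\Phi$ in the variable $X_{n'+m}$, which affects only the single term $p(S_{n'+m})$) are of size $p'(S_{n'})\sim S_{n'}^{\epsilon}$ and hence \emph{large}, so the iterated identity you quote does not by itself yield smallness without the extra smoothness you are importing. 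The case analysis you flag as ``the hard part'' therefore cannot be completed from the stated hypotheses.

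The paper's mechanism is different and uses \eqref{eq:p} just once. Writing $r=n'-n$, $s=\min(r,m)$, $t=\max(r,m)$, one applies \eqref{eq:p} with $x=S_{n+t-1}$, $y=X_{n+t}$, $z=S_{n+t+1,n+t+s}$ to the pair $p(S_{n+t+s})-p(S_{n+t})$. This rewrites the phase, up to an error $O(S_{n+t-1}^{\epsilon-1}X_{n+t}S_{n+t+1,n+t+s})$, as a quantity \emph{independent of $X_{n+t}$}. Then $Y_{n+t}$ is independent of everything else in the expectation and has mean zero, so the main contribution vanishes identically; only the $O(x^{\epsilon-1}yz)$ error term survives, and that is what supplies the gain $N^{(1-a)(\epsilon-1)}$. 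No higher-order information about $p$ is needed. (The paper packages this inside a van der Corput step in $L^{2}(\Omega)$ with shift parameter $r\le N^{c}$, but the same decoupling works in your direct double-sum expansion.)

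A minor numerical point: your target $\mathbb{E}|V|^{2}\lesssim N^{1-2a-\nu}$ with $\nu>0$ is unattainable, since the diagonal $n=n'$ already contributes $\sum_{n}\sigma_{n}\sigma_{n+m}\sim N^{1-2a}$. The right target is $\mathbb{E}|V|^{2}\lesssim N^{1-2a}$ (or the paper's weaker $N^{2-4a-\kappa}$), which still gives $\sum_{N\in\lac}N^{2a-1-b}\sum_{m\le N^{b}}\mathbb{E}|V|\lesssim \sum_{N\in\lac}N^{a-1/2}<\infty$ because $a<1/2$.
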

\begin{proof}
By Fubini's theorem it suffices to show that the expectation
\[
N^{2a-1-b} \sum_{m=1}^{N^{b}} \underbrace{\E \Big| \sum_{n=N^{1-\delta}}^{N-m} Y_{n+m} e(p(S_{n+m})) Y_{n} e(-p(S_{n})) \Big|}_{=: I(m)}
\]
is summable along the lacunary sequence $N\in\lac$.
By Cauchy--Schwarz we have
\begin{equation}
\label{eq:pre-vdC2}
I(m)^{2}
\leq
\E \Big| \sum_{n=N^{1-\delta}}^{N-m} Y_{n} Y_{n+m} e(p(S_{n+m})-p(S_{n})) \Big|^{2}.
\end{equation}
Using the van der Corput inequality \eqref{vdC} with values in the Hilbert space $H=L^{2}(\Omega)$ and $R=N^{c}$, $0<c<1$ to be chosen later, we obtain the estimate
\begin{align*}
&I(m)^2 \leq I_1(m)^2 + I_2(m)^2 + I_3(m)^2 :=\\
&\quad \frac{N-m}{R} \sum_{n=N^{1-\delta}}^{N-m} \| Y_{n} Y_{n+m} e(p(S_{n+m})-p(S_{n})) \|_{L^{2}(\Omega)}^{2}\\
& +
\frac{N-m}{R} \Big| \E \sum_{n=N^{1-\delta}}^{N-2m} Y_{n+2m} Y_{n+m} Y_{n+m} Y_{n} e(p(S_{n+2m})-p(S_{n+m})-p(S_{n+m})+p(S_{n})) \Big| \\
& +
\frac{N-m}{R} \sum_{r=1, r \neq m}^{R} \Big| \E \sum_{n=N^{1-\delta}}^{N-m-r} Y_{n+r} Y_{n+r+m} Y_{n} Y_{n+m} e(p(S_{n+r+m})-p(S_{n+r})-p(S_{n+m})+p(S_{n})) \Big|.
\end{align*}
The task is now to show that, uniformly in $m \leq N^b$, we have
\[ I_j(m)^2 \lesssim N^{2 - 4a - \kappa} \text{ for each } j = 1,2,3, \]
for some $\kappa = \kappa(\delta,a,b,c) > 0$.

To this end we estimate the first term, $I_1(m)^2$, by
\[
\frac{N}{R} \sum_{n=N^{1-\delta}}^{N-m} \| Y_{n} \|_{L^{2}(\Omega)}^{2} \| Y_{n+m} \|_{L^{2}(\Omega)}^{2}
\]
by independence; this is bounded by
\[
\frac{N}{R} \sum_{n=N^{1-\delta}}^{N-m} \sigma_{n} \sigma_{n+m}
\lesssim
N^{1-c} N^{1-2a} < N^{2 - 4a - \kappa}
\]
provided we take $ 2 a < c < 1$.

We next turn to $I_2(m)^2$, which contributes at most
\[
\frac{N}{R} \E \sum_{n=N^{1-\delta}}^{N-2m} | Y_{n+m} Y_{n+2m} Y_{n} Y_{n+m} |,
\]
and by independence this is bounded by
\begin{align*}
\frac{N}{R} \sum_{n=N^{1-\delta}}^{N-2m} \E |Y_{n+m}|^{2} \cdot \E |Y_{n+2m}| \cdot \E |Y_{n}|
&\lesssim
N^{1-c} \sum_{n=N^{1-\delta}}^{N-2m} \sigma_{n} \sigma_{n+m} \sigma_{n+2m} \\
&\lesssim
N^{1-c} N^{1-3a+\nu} \\
&\lesssim N^{2 - 4a - \kappa},
\end{align*}
provided $c > 2a$ (from above) and $\nu = \nu(\delta) > 0$ is taken sufficiently small. ($\nu$ arises from the possibility that $3a > 1$, in which case we may take e.g.\ $\nu = (3a -1) \delta$.)

The contribution of this term is also acceptable.

It remains to estimate $I_3(m)^2$, which we write in the form
\[
I_3(m)^2
=
\frac{N-m}{R} \sum_{r=1, r \neq m}^{R} \Big| \E \sum_{n=N^{1-\delta}}^{N-m-r} Y_{n+r} Y_{n+r+m} Y_{n} Y_{n+m} e(p(S_{n+s+t})-p(S_{n+t})-p(S_{n+s})+p(S_{n})) \Big|,
\]
with $s=\min(r,m)$ and $t=\max(r,m)$.
To recover independence we apply \eqref{eq:p} with
\[
x=S_{n+t-1}, \ y=X_{n+t}, \ z=S_{n+t+1,n+t+s},
\]
to the first two summands in the argument of $e$.
This gives the estimate
\begin{align*}
&\frac{N-m}{R} \sum_{r=1, r \neq m}^{R} \Big| \E \sum_{n=N^{1-\delta}}^{N-m-r} Y_{n+r} Y_{n+r+m} Y_{n} Y_{n+m} e(p(S_{n+t-1}+S_{n+t+1,n+t+s})-p(S_{n+t-1})-p(S_{n+s})+p(S_{n})) \Big|\\
&+
\frac{N-m}{R} \sum_{r=1, r \neq m}^{R} \E \sum_{n=N^{1-\delta}}^{N-m-r} \Big| Y_{n+r} Y_{n+r+m} Y_{n} Y_{n+m} \min(O(S_{n+t-1}^{\epsilon-1} X_{n+t} S_{n+t+1,n+t+s}),1) \Big|.
\end{align*}
The main feature of this splitting is that the exponential in the first term does not depend on $X_{n+t}$, so $Y_{n+t}$ is independent from all other terms.
Therefore the first term vanishes identically.
The second term is estimated by
\begin{align*}
&\frac{N}{R} \sum_{r=1, r \neq m}^{R} \sum_{n=N^{1-\delta}}^{N-m-r} \E \Big( | Y_{n+r} Y_{n+r+m} Y_{n} Y_{n+m} | \cdot \min(S_{n+t-1}^{\epsilon-1} S_{n+t+1,n+t+s},1) \Big)\\
&\leq
\frac{N}{R} \sum_{r=1, r \neq m}^{R} \sum_{n=N^{1-\delta}}^{N-m-r} \E \Big( | Y_{n+r} Y_{n+r+m} Y_{n} Y_{n+m} | \cdot \min(S_{n+t-1}^{\epsilon-1} (S_{n+t+1,n+t+s-1}+1),1) \Big)
\end{align*}
By Lemma~\ref{lem:chernoff} this is bounded by
\[
\frac{N}{R} \sum_{r=1, r \neq m}^{R} \sum_{n=N^{1-\delta}}^{N-m-r} \E \Big( | Y_{n+r} Y_{n+r+m} Y_{n} Y_{n+m} | \cdot \min((W_{n+t-1})^{\epsilon-1} (S_{n+t+1,n+t+s-1}+1), 1_{E_{n+t-1}}) \Big),
\]
where $E_{n}$ is an exceptional set of measure $\lesssim \exp(-c n^{1-a})$.
At this point we estimate the minimum by a sum.
We consider first the non-exceptional part.
All remaining random variables are independent, so we get the estimate
\begin{align*}
&\frac{N}{R} \sum_{r=1, r \neq m}^{R} \sum_{n=N^{1-\delta}}^{N-m-r} \sigma_{n+r} \sigma_{n+r+m} \sigma_{n} \sigma_{n+m} (n+t-1)^{(1-a)(\epsilon-1)} \Big( \sum_{j=n+t+1}^{n+t+s-1}\sigma_{j}+1 \Big)\\
& \qquad \leq \frac{N}{R} \sum_{r=1, r \neq m}^{R} \sum_{n=N^{1-\delta}}^{N-m-r} n^{-4a} n^{(1-a)(\epsilon-1)} (n^{-a} N^{b}+1)\\
&\lesssim \frac{N}{R} \sum_{r=1, r \neq m}^{R} N^{b} \sum_{n=N^{1-\delta}}^{N-m-r} n^{-4a} n^{(1-a)(\epsilon-1)} n^{-a}\\
&\lesssim N^{2 - 4a + ( b- a  + \nu) + (1-a)(\epsilon -1)} \\
&\lesssim N^{2 - 4a - \kappa},
\end{align*}
provided that $b$ is taken sufficiently close to $a$ and $\delta$ is sufficiently small, since $(1-a)(\epsilon-1)<0$.

Finally, for the exceptional part we have superpolynomial decay in $N$.
\end{proof}

Thus we have verified that the assumption \eqref{eq:c-corr} of Lemma~\ref{lem:crit} holds almost surely in the setting of Proposition~\ref{key}.
The missing assumption \eqref{eq:c-sum} also holds almost surely because
\[
\sum_{n=1}^{N} |Y_{n}|
\leq
S_{N} + W_{N}
\]
and in view of \eqref{eq:lln}.
This completes the proof of Proposition~\ref{key} and hence of Theorem~\ref{MAIN}.

\printbibliography
\end{document}